\documentclass[a4paper,10pt]{amsart}

\usepackage{amsmath}
\usepackage{amssymb}
\usepackage{amsthm}
\usepackage{upgreek} 
\usepackage[all,cmtip]{xy} 

\pagestyle{plain}  
\newtheorem{theorem}{Theorem}[section]
\newtheorem{lemma}[theorem]{Lemma}

\newcommand{\Z}{\mathbb Z}
\newcommand{\ZZ}{{\mathbb Z\times\mathbb Z}}
\newcommand{\pset}{\{0,1,\ldots,p-1\}}
\newcommand{\gset}{\{0,1,\ldots,\gcd(m,n)-1\}}
\newcommand{\mset}{\{0,1,\ldots,m-1\}}
\newcommand{\nset}{\{0,1,\ldots,n-1\}}
\DeclareMathOperator{\lcm}{lcm}

\begin{document}

\title{A note on the no-three-in-line problem on a torus}

\author[A. Misiak]{Aleksander Misiak}
\address{
School of Mathematics, West Pomeranian University of Technology\\
al. Piast\'{o}w 48/49, 70-310 Szczecin, Poland}
\email{Aleksander.Misiak@zut.edu.pl}

\author[Z. St\c{e}pie\'{n}]{Zofia St\c{e}pie\'{n}}
\address{
School of Mathematics, West Pomeranian University of Technology\\
al. Piast\'{o}w 48/49, 70-310 Szczecin, Poland}
\email{stepien@zut.edu.pl}

\author[A. Szymaszkiewicz]{Alicja Szymaszkiewicz}
\address{
School of Mathematics, West Pomeranian University of Technology\\
al. Piast\'{o}w 48/49, 70-310 Szczecin, Poland}
\email{alicjasz@zut.edu.pl}

\author[L. Szymaszkiewicz]{Lucjan Szymaszkiewicz}
\address{
Institute of Mathematics, Szczecin University\\ 
Wielkopolska 15, 70-451 Szczecin, Poland
}
\email{lucjansz@wmf.univ.szczecin.pl}

\author[M. Zwierzchowski]{Maciej Zwierzchowski}
\address{
School of Mathematics, West Pomeranian University of Technology\\
al. Piast\'{o}w 48/49, 70-310 Szczecin, Poland}
\email{mzwierz@zut.edu.pl}

\subjclass[2000]{05B99}

\keywords{Discrete torus; No-three-in-line problem; Chinese Remainder Theorem}

\thanks{Corresponding author: Z. St\c{e}pie\'{n}; e-mail: stepien@zut.edu.pl; Tel/fax:+48914494826}

\begin{abstract}
In this paper we 
show that at most $2 \gcd(m,n)$ points can be placed with no
three in a line on an $m\times n$ discrete torus.
In the situation when $\gcd(m,n)$ is a prime, we completely solve the problem.
\end{abstract}

\maketitle

\section{Introduction}

 The no-three-in-line-problem \cite{Dud} asks for the maximum number of points that can be placed in the 
$n \times n$ grid with no three points collinear. This question has been widely studied,
 but is still not resolved.

The obvious upper bound is $2n$ since one can put at most two points in each row.
This bound is attained for many small cases, for details see \cite{F92} and \cite{F98}.
In \cite{GK} the authors  give a probabilistic argument to support the conjecture
that for a large $n$ this limit is unattainable.

As a lower bound, Erd\"os' construction (see \cite{E}) shows that for $p$ prime one can select $p$ points
with no three collinear.
In \cite{HJSW} it is shown, that for $p$ prime one can select $3(p-1)$ points from a $2p\times 2p$ grid
with no three collinear.

In the literature we can find some extensions of the no-three-in-line problem (see \cite{Fowler}, \cite{PA}).
This paper is generalization of \cite{Fowler}, where authors analyze the no-three-in-line-problem on the
discrete torus. This modified problem is still interesting.

Let $m$ and $n$ be positive integers greater than $1$. By a discrete torus $T_{m\times n}$ we mean $\mset\times\nset$.

Four integers $a,$ $b,$ $u,$ $v$ with $\gcd(u,v)=1$
correspond to the line $\left\{(a+u k, b+v k):k\in\Z  \right\}$ on $\ZZ$. The condition $\gcd(u,v)=1$ ensures that
each pair $P,Q$ of distinct points in $\ZZ$
belongs to exactly one line. For instance, the points $O=(0,0),$ $P=(2,2)$ 
belong to the line $\left\{(k,k):k\in\Z  \right\}$. 

We define lines on $T_{m\times n}$ to be images of lines in the $\mathbb Z\times \mathbb Z$
under the projection $\pi_{m,n}:\ZZ\to T_{m\times n}$ defined as follows
\[
\pi_{m,n}(a,b):=(a \bmod m, b \bmod n).
\]

By $x \bmod y$ we mean the smallest non-negative remainder when $x$ is divided by $y$.

We say that a set $X\subset T_{m\times n}$ satisfies the no-three-in-line condition if
there are no three collinear points in $X$.
Let $\uptau\left(T_{m\times n}\right)$ denote the size of 
the largest set $X$ satisfying the no-three-in-line condition.

In our paper we will prove the following theorems.
\begin{theorem} \label{M1}
We have 
\[
\uptau\left(T_{m\times n}\right)\leq 2\gcd(m,n).
\]
\end{theorem}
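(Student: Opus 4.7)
The plan is to exhibit a partition of $T_{m\times n}$ into exactly $\gcd(m,n)$ lines. Since the no-three-in-line condition forces $X$ to contain at most two points of each line, summing over the parts of such a partition gives $|X|\le 2\gcd(m,n)$.

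For the partition, I would use the family of ``diagonal'' lines of slope one, namely
\[
L_a \;=\; \{(a+k,\,k):k\in\Z\}\qquad(a\in\Z),
\]
each of which is a valid line of $\ZZ$ since $\gcd(1,1)=1$. Setting $d=\gcd(m,n)$, I would prove three assertions about their projections $\pi_{m,n}(L_a)$ to $T_{m\times n}$:
\emph{(i)} every point $(x,y)\in T_{m\times n}$ lies on $\pi_{m,n}(L_{x-y})$ (take $k=y$);
\emph{(ii)} $\pi_{m,n}(L_a)=\pi_{m,n}(L_{a'})$ if and only if $a\equiv a'\pmod d$;
\emph{(iii)} each $\pi_{m,n}(L_a)$ consists of exactly $\lcm(m,n)$ points of $T_{m\times n}$.

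For \emph{(ii)}, $(a+k,k)$ and $(a'+k',k')$ project to the same point iff $k\equiv k'\pmod n$ and $a-a'\equiv k'-k\pmod m$; writing $k'-k=nt$ shows $a-a'\in m\Z+n\Z=d\Z$, and conversely any $a-a'\in d\Z$ can be realized as $-nt\bmod m$ for some $t$, yielding a common point. For \emph{(iii)}, the point $\pi_{m,n}(a+k,k)$ depends only on $k$ modulo $\lcm(m,n)$, and distinct residues mod $\lcm(m,n)$ give distinct projected points. Combining \emph{(i)}--\emph{(iii)}, the $d$ sets $\pi_{m,n}(L_0),\ldots,\pi_{m,n}(L_{d-1})$ have total size $d\cdot\lcm(m,n)=mn=|T_{m\times n}|$ and cover the torus, so they form a partition into $d$ lines.

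To finish, since $X\subset T_{m\times n}$ satisfies the no-three-in-line condition, each of these $d$ lines contains at most two points of $X$, hence
\[
|X|\;=\;\sum_{a=0}^{d-1}\bigl|X\cap\pi_{m,n}(L_a)\bigr|\;\le\;2d\;=\;2\gcd(m,n).
\]

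The only real obstacle is the bookkeeping in step \emph{(ii)}: making sure that the congruence $a\equiv a'\pmod d$ is exactly the condition for two diagonal lines to coincide on the torus, which is where the identity $m\Z+n\Z=d\Z$ (equivalently, B\'ezout) plays its role. Everything else is routine size counting.
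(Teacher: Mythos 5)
Your proof is correct and follows essentially the same route as the paper: both cover $T_{m\times n}$ by the $\gcd(m,n)$ slope-one diagonal lines indexed by $x-y$ modulo $\gcd(m,n)$ and then use that a no-three-in-line set meets each line in at most two points. The only cosmetic difference is that the paper phrases the coincidence/disjointness of these lines via the Chinese Remainder Theorem while you use B\'ezout directly, and your cardinality count in step \emph{(iii)} is extra information not needed for the bound.
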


\begin{theorem} \label{M2}
We have
\begin{enumerate}
\item For $\gcd(m,n)=1,$ $\uptau\left(T_{m\times n}\right)= 2$. 
\item For $\gcd(m,n)=2,$ $\uptau\left(T_{m\times n}\right)= 4$.
\item  Let $\gcd(m,n)=p$ be an odd prime. 
\begin{enumerate}
\item If $\gcd(pm,n)=p^2$ or $\gcd(m,pn)=p^2,$ then $\uptau\left(T_{m\times n}\right)=2p.$ 
\item If $\gcd(pm,n)=p$ and $\gcd(m,pn)=p$, then $\uptau\left(T_{m\times n}\right)= p+1$.
\end{enumerate}
\end{enumerate}
\end{theorem}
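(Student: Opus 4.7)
The proof splits into the listed cases. Case~(1) is immediate from Theorem~\ref{M1}: the upper bound is $\uptau\leq 2$, and any two distinct points of $T_{m\times n}$ form an admissible set. For case~(2), Theorem~\ref{M1} gives $\uptau\leq 4$; the lower bound is realised by $\{(0,0),(1,0),(0,1),(1,1)\}$, for collinearity of any triple of these points reduces to a Diophantine condition of the form
\[
(x_2-x_1+m\alpha)(y_3-y_1+n\gamma)=(x_3-x_1+m\beta)(y_2-y_1+n\delta),
\]
which modulo $\gcd(m,n)=2$ becomes $1\equiv 0\pmod 2$, contradicting the existence of integer lifts.

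For case~(3), let $p=\gcd(m,n)$ be an odd prime; Theorem~\ref{M1} gives the bound $2p$. In subcase~(3)(a) assume without loss of generality that $p^2\mid m$, and write $m=p^2\mu$, $n=p\nu$ with $\gcd(p\mu,\nu)=1$. I would construct a $2p$-arc by fixing two residues $a\ne b\in\pset$ and placing $p$ points of $T_{m\times n}$ in each of the two $T_{p\times p}$-columns $\{x\equiv a\pmod p\}$ and $\{x\equiv b\pmod p\}$, distributed so that no $x$-coordinate carries more than two of them; this is feasible since each $T_{p\times p}$-column point has $p$ lifts $a,a+p,\ldots,a+(p-1)p$ available in $T_{m\times n}$. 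A triple from this set can be collinear on $T_{m\times n}$ only if its $T_{p\times p}$-projection is collinear, which forces the three points into a single column (a vertical line of $T_{p\times p}$); the mod-$p^2$ reduction of the collinearity equation then yields $0\equiv c\pmod{p^2}$ with $p\nmid c$, thanks to $p^2\mid m$, a contradiction.

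In subcase~(3)(b), one has $p\parallel m$ and $p\parallel n$, so $m=pm'$, $n=pn'$ with $\gcd(m',n')=1$. The Chinese Remainder Theorem gives $T_{m\times n}\cong T_{p\times p}\times T_{m'\times n'}$; let $\pi\colon T_{m\times n}\to T_{p\times p}$ be the canonical projection. The lower bound $\uptau\geq p+1$ follows by lifting any $(p+1)$-arc from $T_{p\times p}$ (such as an affine oval in $\mathbb{F}_p^2$), since collinearity on $T_{m\times n}$ forces collinearity of the $T_{p\times p}$-image. For the upper bound, given an arc $S\subset T_{m\times n}$, I would show that $\pi|_S$ is injective and that $\pi(S)$ is itself an arc. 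Both claims rest on CRT: two points of $S$ with a common $\pi$-image, together with any third point, would admit collinear lifts in $\ZZ$ constructed by CRT-patching the collinearity witnesses from the $T_{p\times p}$-factor and from the $T_{m'\times n'}$-factor, in which by case~(1) every three points are collinear. Thus $|S|=|\pi(S)|\leq\uptau(T_{p\times p})=p+1$, the final equality being the classical bound for an arc in the affine plane $\mathbb{F}_p^2$.

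The chief obstacle is the CRT combination in (3)(b): one must prove that whenever the mod-$p$ reduction of the collinearity Diophantine equation is solvable, the full equation in $\ZZ$ admits a solution, which requires exploiting the coprimality of $m'$ and $n'$ (and of each with $p$) to independently adjust lifts modulo $m'$ and $n'$ so that the cross-product vanishes in $\Z$ exactly. A secondary, more computational, obstacle is specifying the lift pattern in the (3)(a) construction and checking the mod-$p^2$ obstruction for each type of triple that arises.
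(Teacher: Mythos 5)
Parts (1) and (2) of your proposal, and the lower bound in (3b), coincide with the paper's arguments and are fine. But the two items you flag at the end as ``obstacles'' are not finishing touches: in each case they are the entire mathematical content of the step, and as written your sketch does not close either of them.

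In (3a) the construction is underspecified exactly where the difficulty lives: you never say what the $y$-coordinates of your $2p$ points are, yet both of your key claims depend on them. Writing $m=p^2\mu$, $n=p\nu$ and putting the points of column $a$ at $x\equiv a+t_jp\pmod{p^2}$, the determinant of a within-column triple satisfies $D\equiv p\bigl((t_2-t_1)(y_3-y_1)-(t_3-t_1)(y_2-y_1)\bigr)\pmod{p^2}$; note $p\mid D$ automatically, so your ``$0\equiv c\pmod{p^2}$ with $p\nmid c$'' cannot be literally right, and nonvanishing mod $p^2$ requires the pairs $(t_j\bmod p,\,y_j\bmod p)$ to form an arc in $\mathbb F_p^2$. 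Meanwhile, your claim that a collinear projection forces the triple into one column requires the $p$ points of each column to have pairwise distinct $y$-residues mod $p$ (otherwise two of them project to the same point of $T_{p\times p}$ and the degenerate projected triple lies on a line with anything). Satisfying both demands at once would require a permutation of $\mathbb F_p$ whose graph contains no three collinear points, and for $p=3$ no such permutation exists (every permutation of $\mathbb F_3$ is affine). The repair is to abandon the two-column layout and use a parabola --- which is precisely the paper's construction $\{(i,i^2p)\}\cup\{(i,i^2p+1)\}$ with the coordinates transposed: there the within-row triples give $D=p(j-i)(k-i)(k-j)+p^2M\neq 0$ by the Vandermonde factor, and mixed triples die from a mod-$p$ determinant equal to $j-i$.

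In (3b) the ``CRT combination'' you defer is exactly the theorem to be proved, and it is not routine. The paper establishes the equivalent statement that $\rho^{-1}(\ell)$ is a single line of $T_{m\times n}$ for every line $\ell$ of $T_{p\times p}$ through the origin (Lemmas \ref{L:pencil} and \ref{L:hv}); already for the pencil $\ell^\beta$ this requires producing a slope $\alpha\equiv\beta\pmod p$ with $\gcd(\alpha m,n)=p$, which the paper gets from Dirichlet's theorem on primes in arithmetic progressions. Your sketch offers no mechanism for choosing the single primitive direction vector $(u,v)$ that simultaneously realizes the collinearity witness mod $p$ and the one mod $(m',n')$, and this is where the hypothesis $\gcd(pm,n)=\gcd(m,pn)=p$ must enter --- the patching statement is false without it, as part (3a) shows. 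Until that lemma is supplied, your injectivity of $\pi|_S$ and the bound $|S|\le\uptau(T_{p\times p})=p+1$ are unproved. (The final input $\uptau(T_{p\times p})=p+1$ is indeed classical; the paper reproves it via a conic and a parity count of secants, its Theorem \ref{T:p_p}.)
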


The Theorem 1.2(1) was proved in \cite{Fowler}  by some algebraic argument.
The Theorem 1.2(2) is a generalized version of Proposition 2.1 from \cite{Fowler}. 
Similarly, Theorem 1.2(3a) and Theorem 1.2(3b) are generalizations of Theorem 2.7 and Theorem 2.9 from \cite{Fowler}, 
respectively. 

\section{Proofs of Theorem 1.1 and Theorem 1.2(1)}

One of the main tools used in this
paper is the Chinese Remainder Theorem.
\begin{theorem}[Chinese Remainder Theorem]\label{T:CRT}
Two simultaneous congruences
\[
\begin{aligned}
x&\equiv a \pmod m, \\
x&\equiv b \pmod n 
\end{aligned}
\]
are solvable if and only if $a\equiv b \pmod{\gcd(m,n)}$.
Moreover, the solution is unique modulo $\lcm(m, n)$.
\end{theorem}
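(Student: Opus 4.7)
The plan is to prove the two assertions separately: the iff criterion for solvability, and uniqueness modulo $\lcm(m,n)$. Throughout set $d := \gcd(m,n)$. The proof is elementary, using only divisibility arguments and Bezout's identity.

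For \emph{necessity} of the compatibility condition $a \equiv b \pmod d$, I would argue: if some integer $x$ satisfies both $x \equiv a \pmod m$ and $x \equiv b \pmod n$, then since $d$ divides both $m$ and $n$, each congruence may be reduced modulo $d$ to give $x \equiv a \pmod d$ and $x \equiv b \pmod d$; combining these forces $a \equiv b \pmod d$.

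For \emph{sufficiency}, I would parametrize solutions of the first congruence as $x = a + tm$ with $t \in \Z$, and reformulate the second as the single linear congruence $tm \equiv b - a \pmod n$. By Bezout's identity, there exist $u,v \in \Z$ with $um + vn = d$. The hypothesis $a \equiv b \pmod d$ means $b - a = dk$ for some $k \in \Z$, so setting $t := uk$ yields $tm = ukm \equiv dk = b-a \pmod n$, since $um \equiv d \pmod n$. This gives an explicit simultaneous solution $x = a + ukm$.

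For the \emph{uniqueness} claim, if $x_1$ and $x_2$ both solve the system, then $x_1 - x_2$ is divisible by both $m$ and $n$, hence by $\lcm(m,n)$; conversely, adding any multiple of $\lcm(m,n)$ to a solution preserves both congruences. No real obstacle appears: the one place where content enters is Bezout's identity in the sufficiency step, and this is classical (resolved by the Euclidean algorithm). Since the present paper uses the CRT purely as a tool for passing between congruences modulo $m$, modulo $n$, and modulo $d$, I expect the authors to quote it rather than provide a detailed proof.
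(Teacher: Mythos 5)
Your proof is correct in all three parts: the necessity argument by reducing both congruences modulo $d=\gcd(m,n)$, the explicit construction $x=a+ukm$ via Bezout's identity for sufficiency, and the uniqueness argument via $\lcm(m,n)\mid x_1-x_2$. As you anticipated, the paper states this generalized Chinese Remainder Theorem without proof, treating it as a classical tool; there is therefore no proof in the paper to compare against, and your complete elementary argument fills that gap correctly.
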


Let us define the family $\mathcal L =\left\{ L_{s}:s\in\gset  \right\}$ of lines on $T_{m\times n},$ where
\[
L_{s}=\left\{\pi_{m,n}(k, k-s) \in T_{m\times n} :k\in\Z  \right\}.
\]
\begin{lemma}[see \cite{SSZ}] \label{L:Diag} 
Let 
$a=(a_{x},a_{y})\in T_{m\times n}$ and $d=(a_x - a_y) \bmod \gcd(m,n)$. Then 
$a\in L_d$.
Moreover, we have $ L_{s_{1}}\cap L_{s_{2}}=\emptyset$
for $s_1\neq s_2$ and $s_{1},s_{2}\in \gset.$

\end{lemma}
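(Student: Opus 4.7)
The plan is to reduce both parts of the lemma to a single application of the Chinese Remainder Theorem (Theorem \ref{T:CRT}), viewing each $L_s$ as the image under $\pi_{m,n}$ of the diagonal line $\{(k, k-s) : k \in \Z\}\subset\ZZ$.

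For the first assertion, I would unfold the definition of $L_d$: the membership $(a_x, a_y) \in L_d$ is equivalent to the existence of some $k \in \Z$ satisfying the simultaneous congruences $k \equiv a_x \pmod m$ and $k - d \equiv a_y \pmod n$, i.e.\ $k \equiv a_x \pmod m$ and $k \equiv a_y + d \pmod n$. By Theorem \ref{T:CRT}, this system is solvable if and only if $a_x \equiv a_y + d \pmod{\gcd(m,n)}$, which holds by the very choice $d = (a_x - a_y) \bmod \gcd(m,n)$. Hence such $k$ exists and $a \in L_d$.

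For the disjointness part, I would run the same equivalence backward: if $a = (a_x, a_y) \in L_s$ for some $s \in \gset$, then the solvability criterion of Theorem \ref{T:CRT} forces $s \equiv a_x - a_y \pmod{\gcd(m,n)}$. Since $s$ is restricted to the canonical residue range $\gset$, the value of $s$ is pinned down uniquely and must coincide with $d$. Therefore each point lies on exactly one $L_s$, and $L_{s_1} \cap L_{s_2} = \emptyset$ whenever $s_1 \neq s_2$.

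There is no real obstacle here: all the content of the lemma is packaged in the ``if and only if'' of Theorem \ref{T:CRT}, whose forward direction delivers the parameter $k$ realising $a \in L_d$, and whose backward direction pins down the label $s$ uniquely, giving the disjointness. The only mildly subtle point worth emphasising in the write-up is that the index $s$ must be taken in the canonical range $\gset$ for the uniqueness conclusion to hold; without this normalisation, one would only obtain $s \equiv d \pmod{\gcd(m,n)}$.
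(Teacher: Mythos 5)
Your proposal is correct and follows essentially the same route as the paper: both parts reduce to the solvability criterion of Theorem \ref{T:CRT}, the forward direction producing the parameter $k$ with $k \equiv a_x \pmod m$ and $k \equiv a_y + d \pmod n$, and the converse direction forcing $s \equiv a_x - a_y \pmod{\gcd(m,n)}$. The paper phrases the disjointness step by taking a hypothetical common point of $L_{s_1}$ and $L_{s_2}$ and applying the criterion to $k_1-k_2$, which is just a minor repackaging of your observation that each point determines its label $s$ uniquely within $\gset$.
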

\begin{proof}
By Theorem \ref{T:CRT} there exists  $k\in \Z$ such that
\[
\begin{aligned}
k&\equiv a_{x}    &\pmod m, \\
k&\equiv a_{y} 
+d
&\pmod n.
\end{aligned}
\]
Consequently, 
$(a_{x},a_{y}) = \pi_{m,n}(k, k-d) \in L_d.$
Suppose that $L_{s_{1}}\cap L_{s_{2}}\neq\emptyset.$ This means that there are $k_{1}$, $k_{2}\in \Z$ 
such that 
$\pi_{m,n}(k_{1},k_{1}-s_{1}) = \pi_{m,n}(k_{2},k_{2}-s_{2}).$ 
In other words $k_{1}-k_{2}$ 
is the solution of the following system
\[
\begin{aligned}
k_{1}-k_{2} &\equiv 0    &\pmod m, \\
k_{1}-k_{2}&\equiv s_{1}-s_{2}   &\pmod n.
\end{aligned}
\]
By Theorem~\ref{T:CRT} again, we see that 
$s_{1}-s_{2} \equiv 0 \pmod{\gcd(m,n)}$.
Hence $L_{s_{1}}\cap L_{s_{2}}=\emptyset$ for $s_{1}\neq s_{2}$ and
$s_{1},s_{2}\in \gset.$
\end{proof}

\begin{proof}[Proof of Theorem \ref{M1}]
Let $X\subset T_{m\times n}$ satisfy the no-three-in-line condition. 
By Lemma \ref{L:Diag} for every $a\in X$ there exists $L\in\mathcal L$ such that $a\in L$.
Consequently, $\uptau(T_{m\times n})\leq 2\cdot |\mathcal L| = 2 \cdot \gcd(m,n)$.
\end{proof}

\begin{proof}[Proof of Theorem \ref{M2}(1)]
Obviously it is always true that $\uptau(T_{m\times n})\geq 2$. 
By Theorem \ref{M1} we get the statement.
\end{proof}

\section{Proofs of Theorem \ref{M2}(2) and Theorem \ref{M2}(3a)}

Let $X=(x_{1},x_{2})\in\ZZ$, $Y=(y_{1},y_{2})\in\ZZ$, $Z=(z_{1},z_{2})\in\ZZ$. 
Denote by $D(X,Y,Z)$ 
the following determinant 
\[
\begin{vmatrix}
1 & 1 & 1 \\
x_{1} & y_{1} & z_{1} \\
x_{2} & y_{2} & z_{2} \\
\end{vmatrix}
\] 

Recall the determinant criterion for checking whether points are in a line:
\begin{lemma} \label{L:D0}
Three points $X,Y,Z\in\ZZ$ are in a line if and only if $D(X,Y,Z)=0$.
\end{lemma}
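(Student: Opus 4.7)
The plan is to prove the two directions of the determinant criterion separately, using the specific parametrisation of lines in $\ZZ$ from the introduction, namely $\{(a+uk,b+vk):k\in\Z\}$ with $\gcd(u,v)=1$. The core computation is just the classical row reduction
\[
D(X,Y,Z)=\begin{vmatrix}1&0&0\\ x_1 & y_1-x_1 & z_1-x_1\\ x_2 & y_2-x_2 & z_2-x_2\end{vmatrix}=(y_1-x_1)(z_2-x_2)-(z_1-x_1)(y_2-x_2),
\]
obtained by subtracting the first column from the other two and expanding along the top row. So $D(X,Y,Z)=0$ is equivalent to the $2\times 2$ vectors $Y-X$ and $Z-X$ being linearly dependent over $\Z$ (equivalently over $\mathbb Q$). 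The whole proof is then a matter of matching this linear-dependence condition to the paper's definition of a line.

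For the forward direction I would take $X,Y,Z$ lying on a line $\{(a+uk,b+vk):k\in\Z\}$, pick the corresponding parameters $k_1,k_2,k_3$, and simply observe that $Y-X=(k_2-k_1)(u,v)$ and $Z-X=(k_3-k_1)(u,v)$ are parallel, so the $2\times 2$ determinant above vanishes and therefore $D(X,Y,Z)=0$.

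For the reverse direction, assume $D(X,Y,Z)=0$. The slightly delicate point, and what I expect to be the main obstacle, is that the paper requires the direction vector $(u,v)$ of a line to be primitive (i.e.\ $\gcd(u,v)=1$), so one cannot just take $(u,v)=Y-X$. To handle this I would first dispose of the degenerate case $X=Y$ (then any primitive direction through $X$ which hits $Z$ works, and collinearity is trivial), and otherwise set $d=\gcd(y_1-x_1,y_2-x_2)>0$, write $y_1-x_1=du$, $y_2-x_2=dv$ with $\gcd(u,v)=1$, and consider the line $\ell=\{(x_1+uk,x_2+vk):k\in\Z\}$. Clearly $X\in\ell$ (take $k=0$) and $Y\in\ell$ (take $k=d$).

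To put $Z$ on $\ell$, I would use the vanishing of $D$ in the form $u(z_2-x_2)=v(z_1-x_1)$. Since $\gcd(u,v)=1$, Euclid's lemma gives $u\mid z_1-x_1$, say $z_1-x_1=uk$, and substituting back yields $z_2-x_2=vk$, so $Z=(x_1+uk,x_2+vk)\in\ell$. This exhibits a line (in the paper's sense) through all three points and completes the proof.
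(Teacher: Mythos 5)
Your argument is correct, but note that the paper does not actually prove this lemma at all: it is introduced with ``Recall the determinant criterion\ldots'' and used as a known fact, so there is no proof in the source to compare against. What you supply is therefore a genuine addition, and a worthwhile one, because the paper's definition of a line insists on a primitive direction vector ($\gcd(u,v)=1$), so the reverse implication really does require the little $\gcd$ argument you give rather than merely observing that $Y-X$ and $Z-X$ are linearly dependent. The row reduction, the forward direction, and the appeal to Euclid's lemma are all sound. The one spot that needs a touch-up is the vertical case $u=0$ in the reverse direction: there $\gcd(u,v)=1$ forces $v=\pm 1$, the relation $u(z_2-x_2)=v(z_1-x_1)$ only tells you $z_1=x_1$, and ``substituting back'' is vacuous because the equation $u(z_2-x_2-vk)=0$ no longer pins down $k$; you should instead take $k=v(z_2-x_2)$ directly (or run the same Euclid argument on the second coordinate, where $v\neq 0$). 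With that one-line patch the proof is complete and fully consistent with the paper's parametrisation of lines.
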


Now we prove the determinant criterion on a torus.

\begin{lemma} \label{L:D}
If three points $a,$ $b$ and $c$ of $T_{m\times n}$ are in a line, then
$D(a,b,c)\equiv 0 \pmod{\gcd(m,n)}.$
\end{lemma}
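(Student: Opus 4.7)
The plan is to lift the three collinear points from the torus to collinear points in $\ZZ$, apply the classical determinant criterion there, and then reduce modulo $\gcd(m,n)$. This works because $g:=\gcd(m,n)$ divides both $m$ and $n$, so passing through the projection $\pi_{m,n}$ preserves residues modulo $g$ in each coordinate.

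More concretely, I would first unwind the definition of a line on $T_{m\times n}$: by construction, every line $\ell$ on the torus is the image under $\pi_{m,n}$ of some line $L\subset\ZZ$. So if $a,b,c$ all lie on $\ell$, I can pick preimages $A,B,C\in L$ with $\pi_{m,n}(A)=a$, $\pi_{m,n}(B)=b$, $\pi_{m,n}(C)=c$. Since $A,B,C$ lie on the single line $L$ in $\ZZ$, Lemma \ref{L:D0} immediately gives $D(A,B,C)=0$.

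Next I would compare $D(a,b,c)$ with $D(A,B,C)$ modulo $g$. Writing $A=(A_1,A_2)$, note that $A_1\equiv a_1\pmod m$ and $g\mid m$, so $A_1\equiv a_1\pmod g$; analogously $A_2\equiv a_2\pmod g$ using $g\mid n$. The same holds for $B,C$. Since the determinant $D$ is a polynomial with integer coefficients in the six coordinates, congruent inputs modulo $g$ give congruent outputs, so
\[
D(a,b,c)\equiv D(A,B,C)=0\pmod g,
\]
which is the desired conclusion.

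There is no serious obstacle here; the only point to be careful about is the lifting step, where one must invoke the definition of a torus line as the $\pi_{m,n}$-image of an integer line in order to get genuinely collinear preimages $A,B,C$ (as opposed to arbitrary lifts of $a,b,c$, which need not be collinear in $\ZZ$). Once that is set up, the rest is a one-line reduction modulo $g$.
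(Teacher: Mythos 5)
Your proof is correct and takes essentially the same route as the paper: lift the three collinear torus points to collinear preimages in $\ZZ$, apply Lemma \ref{L:D0} to get $D(A,B,C)=0$, and reduce modulo $\gcd(m,n)$ using $\gcd(m,n)\mid m$ and $\gcd(m,n)\mid n$. The only cosmetic difference is that the paper performs the reduction via an explicit multilinear expansion of the determinant into four terms, whereas you invoke the general fact that an integer polynomial respects congruences.
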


\begin{proof}
Suppose that three points $a=(a_{x},a_{y}),$ $b=(b_{x},b_{y})$ and $c=(c_{x},c_{y})$ are in a line on $T_{m\times n}$.
This means that there are $A, B, C\in\ZZ$ such that $\pi(A)=a,$ $\pi(B)=b,$ $\pi(C)=c$ and $D(A,B,C)=0.$ More precisely 
\[
\begin{aligned}
A&=(a_{x}+\alpha_{x} m,a_{y}+\alpha_{y}n), \\
B&=(b_{x}+\beta_{x} m,b_{y}+\beta_{y}n),   \\
C&=(c_{x}+\gamma_{x} m,c_{y}+\gamma_{y}n)
\end{aligned}
\]
for some  $\alpha_x, \alpha_y, \beta_x, \beta_y, \gamma_x, \gamma_y \in \Z$.

We get
\[
\begin{aligned}
{0} &= {D(A,B,C)}
={ D(a,b,c)}
+{nD((a_{x},\alpha_{2}),(b_{x},\beta_{y}),(c_{x},\gamma_{y}))}\\
&+{ mD((\alpha_{x},a_{y}),(\beta_{x},b_{y}),(\gamma_{x},c_{y} ))}
+{mnD((\alpha_{x},\alpha_{y}),(\beta_{x},\beta_{y}),(\gamma_{x},\gamma_{y}))}.\\
\end{aligned}
\]
Hence $D(a,b,c)\equiv 0 \pmod{\gcd(m,n)}.$
\end{proof}

\begin{proof}[Proof of Theorem \ref{M2}(2)]
Let $\gcd(m,n)= 2.$ Let 
\[
X=\left\{(0,0),(0,1),(1,0),(1,1)\right\}\subset T_{m\times n}.
\]
It is easy to check that $D(a,b,c)\equiv \pm 1 \pmod {\gcd(m,n)}$ for any $a,b,c\in X.$ 
By Lemma~\ref{L:D}, $X$ satisfies the no-three-in-line condition. 
Thus $\uptau\left(T_{m\times n}\right)\geq 4.$
Now Theorem \ref{M1} finishes  the statement.
\end{proof}

\begin{proof}[Proof of Theorem \ref{M2}(3a)]
Let $p=\gcd(m,n)$ be an odd prime.  
Assume without loss of generality that $\gcd(pm,n)=p^2$. Consequently $m = p k$ and $ n = p^2 l$ for some positive integers $k,l$.
Define $X=\{ (i, i^2 p) \in T_{m\times n}: i\in \pset \}$ and
$Y=\{ (i, i^2 p + 1) \in T_{m\times n}: i\in \pset \}$. 
We will show that the set $X\cup Y$ of $2p$ points satisfies the no-three-in-line condition.

Take any three distinct points $(i,i^2 p), (j,j^2 p), (k,k^2 p)$ from $X$.
We will show that these points are not in a line on $T_{m\times n}$.
To do this, we will show that three points 
$A=(i + \alpha_{x} m, i^2 p + \alpha_{y} n),$
$B=(j + \beta_{x} m,  j^2 p + \beta_{y} n),$ 
$C=(k + \gamma_{x} m, k^2 p + \gamma_{y} n),$  
where $\alpha_{x},$ $\alpha_{y},$ $\beta_{x},$ $\beta_{y},$ $\gamma_{x},$ $\gamma_{y}\in \Z$ 
are not in a line on $\ZZ$.
We get
\[
\begin{aligned}
{D(A,B,C)}
&=
D((i,i^2 p),(j,j^2 p),(k,k^2 p))
+{nD((i,\alpha_{y}),(j,\beta_{y}),(k,\gamma_{y}))}\\
&+{mD((\alpha_{x},i^2 p),(\beta_{x},j^2 p),(\gamma_{x},k^2 p))}
+{mnD((\alpha_{x},\alpha_{y}),(\beta_{x},\beta_{y}),(\gamma_{x},\gamma_{y}))}\\
&
=p\cdot D((i,i^2),(j,j^2),(k,k^2))
+p^2 l \cdot D((i,\alpha_{y}),(j,\beta_{y}),(k,\gamma_{y}))\\
&
+pk \cdot p \cdot D((\alpha_{x},i^2),(\beta_{x},j^2),(\gamma_{x},k^2))
+pk \cdot p^2 l \cdot D((\alpha_{x},\alpha_{y}),(\beta_{x},\beta_{y}),(\gamma_{x},\gamma_{y}))\\
&=p (j-i)(k-i)(k-j) + p^2 M \neq 0,\\
\end{aligned}
\]
since $p\nmid (j-i)(k-i)(k-j)$ and $M\in \Z$.

In the same way it can be shown that
any three points from $Y$ are not in a line on $T_{m\times n}$.

Now take any two points $(i,i^2 p), (j,j^2 p)$ from $X$ and $(k,k^2 p+1)$ from $Y$. We have
\[
D((i,i^2 p), (j,j^2 p), (k,k^2 p+1)) \equiv j - i \pmod{ \gcd(m,n) }.
\]
By Lemma~\ref{L:D} these points are not in a line.
The same argument works if we take one point from $X$ and any two points from $Y$. We showed that $\uptau\left(T_{m\times n}\right)\geq 2 p.$ 
Now Theorem \ref{M1} gives the statement.
\end{proof}

\section{Proof of Theorem \ref{M2}(3b)}

Let $p=\gcd(m,n)$ and $\rho:T_{m\times n}\to T_{p\times p}$ be the projection defined as follows
$\rho( u, v ) = (u \bmod p, v \bmod p)$. Since $p|m$ and $p|n$, the diagram
\[
\xymatrix{
\ZZ \ar[d]_{\pi_{p,p}} \ar[r]^{\pi_{m,n}}
&T_{m\times n} \ar[ld]^\rho\\
T_{p\times p}&}\eqno (1)
\]
commutes. Consequently, 
the image of every line on $T_{m\times n}$ is a line on $T_{p\times p}$.
This immediately implies the following result.

\begin{lemma} \label{L:epi}
Let $p=\gcd(m,n)$. The following holds:
\begin{enumerate}
	\item $\uptau\left(T_{m\times n}\right) \geq \uptau\left(T_{p\times p}\right)$,
	\item  If the preimage of every line on $T_{p\times p}$ is a line on $T_{m\times n}$,
	then $\uptau\left(T_{m\times n}\right)=\uptau\left(T_{p\times p}\right)$.  
\end{enumerate}
\end{lemma}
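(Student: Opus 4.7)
The plan is to exploit the commutative diagram (1): for a line $\ell\subset\ZZ$, one has $\rho(\pi_{m,n}(\ell))=\pi_{p,p}(\ell)$, so $\rho$ is surjective and sends every line on $T_{m\times n}$ onto a line on $T_{p\times p}$. This single observation already yields (1), and the extra hypothesis in (2) will allow us to also transport sets downward across $\rho$.

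For part~(1), I would lift a maximum configuration from $T_{p\times p}$. Take $X'\subset T_{p\times p}$ with $|X'|=\uptau(T_{p\times p})$ and no three in line. Since $\rho$ is surjective, choose for each $x'\in X'$ a preimage $x\in\rho^{-1}(x')$, forming $X\subset T_{m\times n}$ with $|X|=|X'|$. If three distinct points of $X$ were collinear on $T_{m\times n}$, their images under $\rho$ would be three distinct (by the construction of the lift) collinear points of $X'$, contradicting the choice of $X'$. Hence $\uptau(T_{m\times n})\geq|X|=\uptau(T_{p\times p})$.

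For part~(2), I would push a maximum configuration from $T_{m\times n}$ down to $T_{p\times p}$, using the extra hypothesis that $\rho^{-1}$ of every line is a line. Let $X\subset T_{m\times n}$ be a maximum no-three-in-line set; I aim to show that $\rho|_X$ is injective and that $\rho(X)$ has no three in line, which will give $|X|=|\rho(X)|\leq\uptau(T_{p\times p})$ and combine with~(1) to force equality. The case $|X|\leq 2$ is trivial since $\uptau\geq 2$. For injectivity, if $a\neq b$ in $X$ satisfy $\rho(a)=\rho(b)$ and $c\in X\setminus\{a,b\}$, I pick any line $L'\subset T_{p\times p}$ through $\rho(a)$ and $\rho(c)$ (such an $L'$ exists because the image under $\pi_{p,p}$ of any line in $\ZZ$ joining lifts of $\rho(a)$ and $\rho(c)$ is a line on $T_{p\times p}$); then $\rho^{-1}(L')$ is a line on $T_{m\times n}$ containing the three distinct points $a,b,c$, a contradiction. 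For the no-three-in-line property of $\rho(X)$, three distinct collinear images $\rho(a),\rho(b),\rho(c)$ lie on some line $L'$, and $\rho^{-1}(L')$ is then a line of $T_{m\times n}$ through the points $a,b,c$, which are distinct by the injectivity just established.

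The main step is the injectivity argument in part~(2); everything else is essentially a diagram chase. The role of the additional hypothesis is precisely to convert a coincidence $\rho(a)=\rho(b)$ in $X$ into an actual collinearity on $T_{m\times n}$, by enveloping it with a whole line of $T_{p\times p}$ and pulling back.
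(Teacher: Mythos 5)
Your proof is correct and takes the same route the paper has in mind: the paper states Lemma \ref{L:epi} as an immediate consequence of the commutative diagram (1) and the fact that $\rho$ maps lines onto lines, and your lifting argument for (1) and pushing-down argument for (2) are exactly the details being left implicit. The injectivity step for $\rho|_X$ in part (2), including the separate treatment of $|X|\leq 2$, is handled correctly.
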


Now, let us define the family 
$\mathcal P_O = \left\{\ell^{\beta}:\beta \in \pset \right\}\cup \left\{\ell^\infty\right\}$ of lines on $T_{p\times p}$ 
passing through  $O=(0,0)$, where
\[
\begin{aligned} 
\ell^{\beta} &= \left\{\pi_{p,p}(k,\beta k) \in T_{p\times p} :k\in \Z  \right\}, \\
\ell^\infty &= \left\{\pi_{p,p}(0, k)\in T_{p\times p}         :k\in\Z \right\}.
\end{aligned}
\]

The following  results can also be found in \cite{Fowler}.

\begin{lemma} \label{L:sum}
Let p be an odd prime. Then $T_{p\times p} = \bigcup_{\ell\in \mathcal P_{O}} \ell$.

\end{lemma}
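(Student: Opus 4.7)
The plan is to verify the claimed union directly: given an arbitrary point $(u,v) \in T_{p\times p}$ with $u,v \in \pset$, I would produce an explicit line in $\mathcal P_O$ containing it, splitting the argument into two cases according to the first coordinate.

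First, if $u = 0$, then $(u,v) = \pi_{p,p}(0,v) \in \ell^\infty$ is immediate from the definition of $\ell^\infty$. Second, if $u \neq 0$, then since $p$ is prime the ring $\Z/p\Z$ is a field, so $u$ admits a multiplicative inverse modulo $p$; I would set $\beta := v u^{-1} \bmod p \in \pset$, which gives $\beta u \equiv v \pmod p$, and hence $(u,v) = \pi_{p,p}(u,\beta u) \in \ell^{\beta}$. Taking the union over all $(u,v) \in T_{p\times p}$ then yields the desired equality $T_{p\times p} = \bigcup_{\ell \in \mathcal P_O} \ell$.

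I do not expect any real obstacle here; the lemma is essentially the standard fact that the affine lines through the origin cover the plane $\mathbb F_p^2$ over the prime field. The hypothesis that $p$ is prime is used precisely to invert non-zero residues modulo $p$. The stronger hypothesis that $p$ is \emph{odd} is not needed for this particular lemma, but it will be invoked later in Section~4 where the cardinality count $|\mathcal P_O| = p+1$ and further divisibility arguments enter.
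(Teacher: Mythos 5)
Your proof is correct and follows essentially the same route as the paper's: split on whether the first coordinate is zero, send the zero case to $\ell^\infty$, and otherwise invert the first coordinate modulo the prime $p$ to find the unique slope $\beta$ (the paper states this step more tersely but it is the same idea). Your side remark is also accurate — only primality, not oddness, is used here, even though the paper's proof likewise invokes ``odd prime.''
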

\begin{proof}
Consider $(a_{x},a_{y})\in T_{p\times p}.$ Suppose $a_{x}\neq 0$. Since $p$ is an odd prime, 
there is a unique $\beta \in \pset$ such that  $(a_{x},a_{y})\in \ell^{\beta}$. 
If $a_{x}$ is zero, then $(a_{x},a_{y})\in \ell^{\infty}.$ Hence, $T_{p\times p}\subset \bigcup_{\ell\in \mathcal P_{O}} \ell.$
The inclusion 
$ \bigcup_{\ell\in \mathcal P_{O}} \ell \subset T_{p\times p}$ is obvious.

\end{proof}

In the next two lemmas we will investigate the sets $\rho^{-1}(\ell)$ for $\ell\in\mathcal P_{O}.$

\begin{lemma} \label{L:pencil}
Let $p$ be an odd prime. 
For every $\beta \in \{1,2,\ldots, p-1 \}$ the set $\rho^{-1}(\ell^\beta)$ is a line on 
$T_{m\times n}$. 
\end{lemma}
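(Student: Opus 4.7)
The strategy is to exhibit an explicit line $L\subseteq T_{m\times n}$ and verify that it coincides with $\rho^{-1}(\ell^\beta)$. The naive candidate $\pi_{m,n}(\{(k,\beta k):k\in\Z\})$ is a line on $T_{m\times n}$ that projects onto $\ell^\beta$, but if $\beta$ shares a prime factor with $n/p$ then this line is strictly smaller than the preimage and the strategy fails. The fix is to replace $\beta$ by an integer $\beta'$ in the same residue class modulo $p$ but coprime to $n$. Applying Theorem~\ref{T:CRT} across the (pairwise coprime) prime-power factors of $n$, one can impose $\beta'\equiv \beta\pmod p$ (so $\beta'\not\equiv 0\pmod p$, since $1\le\beta\le p-1$) together with $\beta'\not\equiv 0\pmod q$ for each prime $q\mid n$ with $q\neq p$; such $\beta'$ exists and satisfies $\gcd(\beta',n)=1$.

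Now set $L=\pi_{m,n}(\{(k,\beta' k):k\in\Z\})$. Since $\gcd(1,\beta')=1$, $L$ is a line on $T_{m\times n}$ by the definition given in the introduction. The inclusion $L\subseteq\rho^{-1}(\ell^\beta)$ is automatic: $\rho$ sends $(k\bmod m,\beta' k\bmod n)$ to $(k\bmod p,\beta k\bmod p)\in\ell^\beta$, using $\beta'\equiv\beta\pmod p$.

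For the reverse inclusion I would take $(u,v)\in \rho^{-1}(\ell^\beta)$, i.e. $v\equiv \beta u\pmod p$, and produce $k\in\Z$ realising it, that is $k\equiv u\pmod m$ and $\beta' k\equiv v\pmod n$. The coprimality $\gcd(\beta',n)=1$ lets one invert $\beta'$ modulo $n$ and rewrite the second congruence as $k\equiv(\beta')^{-1}v\pmod n$. By Theorem~\ref{T:CRT} this pair of congruences is solvable precisely when $u\equiv(\beta')^{-1}v\pmod p$, which rearranges to $\beta'u\equiv v\pmod p$, equivalently $\beta u\equiv v\pmod p$, the hypothesis on $(u,v)$. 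Hence $(u,v)\in L$ and therefore $L=\rho^{-1}(\ell^\beta)$.

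The only non-routine step is the choice of $\beta'$ at the outset; everything afterwards is two mechanical applications of the Chinese Remainder Theorem. If $\gcd(m,n)$ were composite, the same plan would not go through verbatim, but because $p$ is prime, the coprimality obstruction is controlled one prime at a time and CRT handles it cleanly.
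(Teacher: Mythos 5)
Your proof is correct, and it follows the same overall strategy as the paper: replace the slope $\beta$ by a representative $\beta'$ of the same class modulo $p$ that is coprime to $n$, and then show that the single line of slope $\beta'$ through the origin is the entire preimage $\rho^{-1}(\ell^\beta)$. The two arguments differ in how they execute each half, and in both places your version is the more elementary one. For the existence of the good slope, the paper invokes Dirichlet's theorem on primes in arithmetic progressions to produce a prime $\alpha>n$ with $\alpha\equiv\beta\pmod p$, which automatically gives $\gcd(\alpha,n)=1$; you instead build $\beta'$ by prescribing residues at each prime dividing $n$, which needs only the Chinese Remainder Theorem (in its several-moduli form, a routine extension of Theorem~\ref{T:CRT} as stated). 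For the reverse inclusion, the paper works with the pair of moduli $\alpha m$ and $n$, solves $k_1\equiv\alpha a_x\pmod{\alpha m}$, $k_1\equiv a_y\pmod n$, and then divides by $\alpha$; you invert $\beta'$ modulo $n$ and reduce directly to a two-congruence system with moduli $m$ and $n$, whose solvability condition modulo $\gcd(m,n)=p$ is exactly the hypothesis $v\equiv\beta u\pmod p$. Your route buys independence from a nontrivial analytic input (Dirichlet) and a cleaner solvability criterion; the paper's route avoids having to discuss inverses modulo $n$ and the multi-modulus CRT. Both correctly identify the key obstruction, namely that the naive line of slope $\beta$ can be a proper subset of the preimage when $\gcd(\beta,n)>1$.
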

\begin{proof}
First we claim that there is $\alpha \in \Z$ such that $\alpha \equiv \beta \pmod p$ 
and $\gcd(\alpha m,n)=p$. 
Indeed, for instance, take $\alpha$ such that the following conditions are satisfied:
(i)~$\alpha=\beta+kp$ for some $k\in \Z$, 
(ii)~$\alpha$ is a prime, 
(iii)~$\alpha$ is greater than $n$.
Since $p$ is an odd prime, the existence of such $\alpha$ is guaranteed 
by   Dirichlet's theorem on arithmetic progressions.

Define $L^\alpha = \{ \pi_{m,n}(k,\alpha k):k\in\Z\}$.
Now, we will show that $\rho^{-1}(\ell^\beta)=L^{\alpha}.$ Let $(a_{x},a_{y})\in \rho^{-1}(\ell^\beta)$.  
Then $a_{y}\equiv \beta a_{x} \pmod {p}$ and  $a_{y}\equiv  \alpha a_{x} \pmod {p}$.
By Theorem \ref{T:CRT} there exists $k_{1}\in \Z$ such that
\[
\begin{aligned}
k_{1}&\equiv \alpha a_{x}   &\pmod {\alpha m}, \\
k_{1}&\equiv  a_{y}    &\pmod {n}.
\end{aligned}
\]
It is easy to see that $k_{1}=\alpha k$ for $k\in \Z$ and we get
\[
\begin{aligned}
\alpha  k &\equiv \alpha a_{x}   &\pmod {\alpha m}, \\
\alpha  k &\equiv a_{y}     &\pmod {n}.
\end{aligned}
\]
Hence
\[
\begin{aligned}
k &\equiv  a_{x}   &\pmod {m}, \\
\alpha  k &\equiv a_{y}  &\pmod {n}.
\end{aligned}
\]
This means that 
$
(a_x, a_y)\in L^{\alpha}.
$
Since $\rho(L^\alpha)\subset \ell^\alpha=l^\beta$, we have $L^{\alpha}\subset \rho^{-1}(\ell^\beta)$.  
The proof is finished.

\end{proof}

\begin{lemma} \label{L:hv}
Let $p=\gcd(m,n)$. The following holds:
\begin{enumerate}
\item  If  $\gcd(m,pn)=p$,
then $\rho^{-1}(\ell^\infty)$ is a line in $T_{m\times n}$,
\item  If  $\gcd(pm,n)=p$,
then $\rho^{-1}(\ell^0)$ is a line in $T_{m\times n}$.
\end{enumerate}
\end{lemma}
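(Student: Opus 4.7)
The plan is to exhibit, for each part, an explicit line $L$ on $T_{m\times n}$ with $\rho(L)\subseteq \ell^\infty$ (resp.\ $\ell^0$), and then promote the inclusion $L\subseteq \rho^{-1}(\ell^\infty)$ to equality by a cardinality argument. Swapping the roles of the two coordinates interchanges the two hypotheses and the two target lines, so it suffices to carry out (1); part (2) follows mutatis mutandis.

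For (1) I would take $L=\{\pi_{m,n}(pk,k):k\in\Z\}$, which is a line on $T_{m\times n}$ since $\gcd(p,1)=1$. Commutativity of diagram~(1) gives
\[
\rho(\pi_{m,n}(pk,k))=\pi_{p,p}(pk,k)=\pi_{p,p}(0,k)\in \ell^\infty,
\]
so $L\subseteq \rho^{-1}(\ell^\infty)$. To conclude, I would count both sides. The preimage $\rho^{-1}(\ell^\infty)$ consists of the $(a_x,a_y)\in T_{m\times n}$ with $p\mid a_x$; since $p\mid m$ this gives $|\rho^{-1}(\ell^\infty)|=(m/p)\,n$. For $|L|$, the period of $k\mapsto\pi_{m,n}(pk,k)$ is the least positive integer $k$ with $m\mid pk$ and $n\mid k$, namely $\lcm(m/p,n)$. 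Using the identity $\gcd(m,pn)=p\cdot\gcd(m/p,n)$ (which holds because $p\mid m$), the hypothesis $\gcd(m,pn)=p$ reduces to $\gcd(m/p,n)=1$, so $\lcm(m/p,n)=mn/p$. Hence $|L|=(m/p)\,n=|\rho^{-1}(\ell^\infty)|$, forcing $L=\rho^{-1}(\ell^\infty)$.

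For (2) the identical argument with $L'=\{\pi_{m,n}(k,pk):k\in\Z\}$ works: $\gcd(1,p)=1$ makes $L'$ a line, the commuting diagram gives $\rho(L')\subseteq\ell^0$, and the hypothesis $\gcd(pm,n)=p\cdot\gcd(m,n/p)=p$ yields $\gcd(m,n/p)=1$, so $|L'|=\lcm(m,n/p)=mn/p=|\rho^{-1}(\ell^0)|$. The main obstacle is essentially cosmetic: translating the two hypotheses through the short gcd identities $\gcd(m,pn)=p\gcd(m/p,n)$ and $\gcd(pm,n)=p\gcd(m,n/p)$ so that the periods of the candidate lines match the preimage cardinalities; no deeper number-theoretic input (in particular no Dirichlet-type argument as in Lemma~\ref{L:pencil}) is required, since the ``slopes'' here are $\infty$ and $0$.
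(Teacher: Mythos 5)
Your proof is correct, and it identifies exactly the same witnessing lines as the paper --- $L=\{\pi_{m,n}(pk,k):k\in\Z\}$ for $\rho^{-1}(\ell^\infty)$ and $L'=\{\pi_{m,n}(k,pk):k\in\Z\}$ for $\rho^{-1}(\ell^0)$ --- but it establishes the set equality by a different mechanism. The paper proves the nontrivial inclusion $\rho^{-1}(\ell^\infty)\subseteq L$ pointwise: given $(a_x,a_y)$ with $p\mid a_x$, it applies the Chinese Remainder Theorem to solve $k_1\equiv a_x\pmod m$, $k_1\equiv pa_y\pmod{pn}$, which is solvable precisely because $\gcd(m,pn)=p$ divides $a_x-pa_y$, and then reads off the parameter from $k_1=pk$. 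You instead prove only the trivial inclusion $L\subseteq\rho^{-1}(\ell^\infty)$ and close the gap by counting: $\lvert\rho^{-1}(\ell^\infty)\rvert=(m/p)\,n$, while $\lvert L\rvert=\lcm(m/p,n)$, and the hypothesis enters through the identity $\gcd(m,pn)=p\gcd(m/p,n)$ (valid since $p\mid m$), which forces $\gcd(m/p,n)=1$ and hence $\lvert L\rvert=(m/p)\,n$. The two arguments use the hypothesis $\gcd(m,pn)=p$ to encode the same divisibility fact, but yours is self-contained and dispenses with the CRT machinery, whereas the paper's version has the stylistic advantage of running in parallel with the CRT-based proof of Lemma~\ref{L:pencil}. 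Your period computation is sound (injectivity of $k\mapsto\pi_{m,n}(pk,k)$ on a fundamental domain follows from $m\mid p(k_1-k_2)$ and $n\mid k_1-k_2$ implying $\lcm(m/p,n)\mid k_1-k_2$), and the symmetric treatment of part (2) is fine.
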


\begin{proof}(1) 
Let $L^\infty = \{\pi_{m,n}(pk,k):k\in\Z\}$.
We will show that $\rho^{-1}(\ell^\infty) = L^\infty$.
Take $(a_x, a_y)\in \rho^{-1}(\ell^\infty)$.
Hence $a_x \equiv 0 \pmod p$.
By Theorem \ref{T:CRT}   there exists  $k_{1}\in \Z$ such that
\[
\begin{aligned}
k_{1}&\equiv a_{x}    &\pmod {m}, \\
k_{1}&\equiv pa_{y}   &\pmod {pn}.
\end{aligned}
\]
It is easy to see that $k_{1}=pk$ for some  $k\in \Z$ and we get
\[
\begin{aligned}
pk &\equiv a_{x}    &\pmod {m}, \\
pk &\equiv pa_{y}  &\pmod {pn}.
\end{aligned}
\]
Hence
\[
\begin{aligned}
pk&\equiv a_{x}    &\pmod m, \\
k&\equiv a_{y}   &\pmod n
\end{aligned}
\]
and $a\in L^\infty.$ The inclusion 
$L^\infty\subset \rho^{-1}(\ell^\infty)$ is obvious.
The proof is finished.

\noindent (2) The proof is similar to (1). 
\end{proof}

\begin{theorem} \label{L:rev}
Let $p=\gcd(m,n)$ be an odd prime such that $\gcd(pm,n)=\gcd(m,pn)=p$. Then
we have $\uptau\left(T_{m\times n}\right) = \uptau\left(T_{p\times p}\right).$
\end{theorem}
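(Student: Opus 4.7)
The plan is to invoke Lemma \ref{L:epi}(2). So the task reduces to showing that for every line $\ell$ on $T_{p\times p}$, the preimage $\rho^{-1}(\ell)$ is a line on $T_{m\times n}$. The assembled lemmas from this section already handle the lines through the origin; the remaining work is a clean reduction from arbitrary lines to lines through the origin.

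First I would carry out the translation reduction. The groups $T_{m\times n}$ and $T_{p\times p}$ are abelian, and $\rho$ is a surjective group homomorphism since $p\mid m$ and $p\mid n$. A general line on $T_{p\times p}$ is the image under $\pi_{p,p}$ of an affine line $\{(a+uk,b+vk):k\in\Z\}$ with $\gcd(u,v)=1$, hence can be written as $(a',b')+\ell'$ where $\ell'=\pi_{p,p}\{(uk,vk):k\in\Z\}$ passes through $O$ and $(a',b')=\pi_{p,p}(a,b)$. Choose any lift $(a,b)\in T_{m\times n}$ of $(a',b')$; the fact that $\rho$ is a homomorphism gives the identity
\[
\rho^{-1}\bigl((a',b')+\ell'\bigr)=(a,b)+\rho^{-1}(\ell').
\]
A translate of a line on $T_{m\times n}$ is again a line on $T_{m\times n}$ (translation in $\ZZ$ carries an affine line to an affine line, and $\pi_{m,n}$ is equivariant), so it is enough to prove $\rho^{-1}(\ell)$ is a line for every $\ell\in\mathcal P_O$.

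Next I would run through the three cases for $\ell\in\mathcal P_O$ afforded by Lemma \ref{L:sum}. For $\ell=\ell^\beta$ with $\beta\in\{1,\ldots,p-1\}$, Lemma \ref{L:pencil} directly yields that $\rho^{-1}(\ell^\beta)$ is a line. For $\ell=\ell^0$, the hypothesis $\gcd(pm,n)=p$ allows us to apply Lemma \ref{L:hv}(2). For $\ell=\ell^\infty$, the hypothesis $\gcd(m,pn)=p$ allows us to apply Lemma \ref{L:hv}(1). This exhausts $\mathcal P_O$.

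Combining the two steps, $\rho^{-1}(\ell)$ is a line on $T_{m\times n}$ for every line $\ell$ on $T_{p\times p}$, so Lemma \ref{L:epi}(2) gives $\uptau(T_{m\times n})=\uptau(T_{p\times p})$. The only conceptual step is the translation reduction; it is essentially routine but does use both the group structure of $T_{m\times n}$ and the fact that the notion of line is translation-invariant, which is precisely where both numerical hypotheses $\gcd(pm,n)=\gcd(m,pn)=p$ become necessary (through the two branches of Lemma \ref{L:hv}). Everything else is an assembly of the preceding lemmas.
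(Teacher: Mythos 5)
Your proposal is correct and follows essentially the same route as the paper: reduce to Lemma \ref{L:epi}(2) and verify that preimages of the lines in $\mathcal P_O$ are lines via Lemma \ref{L:pencil} and the two branches of Lemma \ref{L:hv}. The only difference is that you explicitly supply the translation argument reducing arbitrary lines to lines through $O$ (using that $\rho$ is a group homomorphism and that translates of lines are lines), a step the paper's proof passes over with a bare ``Hence''; this is a worthwhile clarification rather than a divergence.
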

\begin{proof}
By Lemma \ref{L:hv} and \ref{L:pencil} we get that $\rho^{-1}(\ell)$ is a line on $T_{m\times n}$ for any 
$\ell\in\mathcal P_{O}$.
Hence the preimage of every line on $T_{p\times p}$ is a line on $T_{m\times n}.$ 
 Lemma \ref{L:epi}(2) finishes the proof.
\end{proof}

The following result can be found in \cite{Fowler}. Here we present the complete proof.

\begin{theorem} \label{T:p_p}
Let $p$ be an odd prime. Then $\uptau(T_{p\times p}) = p+1$.
\end{theorem}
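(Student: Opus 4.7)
The plan is to show $\uptau(T_{p\times p}) \leq p+1$ and then construct a witness of size $p+1$. For the upper bound I will combine Lemma \ref{L:sum} with the translation-invariance of lines on $T_{p\times p}$ and a parity argument on a parallel class. For the lower bound I will take the affine points of an anisotropic conic $x^2 + dy^2 = 1$.

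\emph{Upper bound.} Let $X \subseteq T_{p\times p}$ be no-three-in-line. Since the translations $(u,v) \mapsto (u+s \bmod p, v+t \bmod p)$ are bijections of $T_{p\times p}$ sending lines to lines, I may assume $O = (0,0) \in X$. By Lemma \ref{L:sum}, the $p+1$ lines of $\mathcal{P}_O$ all pass through $O$ and cover $T_{p\times p}$; each contains at most one point of $X \setminus \{O\}$, hence $|X| \leq p+2$. Assume for contradiction $|X| = p+2$. Repeating the argument after translating each $P \in X$ to $O$, every line of $T_{p\times p}$ meets $X$ in either $0$ or $2$ points. Now partition $T_{p\times p}$ into the $p$ pairwise disjoint horizontal lines $\pi_{p,p}(\{(k,c) : k \in \Z\})$ for $c \in \pset$. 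Each contains an even number of $X$-points, so $|X|$ is even --- but $p+2$ is odd since $p$ is odd, a contradiction. Hence $|X| \leq p+1$.

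\emph{Lower bound.} Choose $d \in \pset$ so that $-d$ is a quadratic non-residue modulo $p$ (take $d = 1$ if $p \equiv 3 \pmod 4$; otherwise any non-residue). Define
\[
X = \{(x, y) \in T_{p\times p} : x^2 + d y^2 \equiv 1 \pmod p\}.
\]
A short Legendre-symbol calculation gives $|X| = p+1$. To verify the no-three-in-line condition, substitute a parametrized line $\{(a + uk, b + vk) : k \in \Z\}$ into $x^2 + dy^2 = 1$: the result is a quadratic in $k$ with leading coefficient $u^2 + d v^2$, which is nonzero modulo $p$ since $-d$ is a non-residue and $\gcd(u,v) = 1$. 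Hence the line meets $X$ in at most two points.

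\emph{Main obstacle.} The decisive step in the upper bound is recognizing that $|X| = p+2$ forbids lines meeting $X$ in exactly one point, forcing $|X|$ to be even when tallied across any parallel class --- incompatible with $p+2$ odd. The lower bound reduces to the classical count $|X| = p+1$, which is a routine quadratic-character-sum exercise.
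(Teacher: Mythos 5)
Your proposal is correct and follows essentially the same route as the paper: the pencil of $p+1$ lines through a point of $X$ gives $|X|\leq p+2$, the case $|X|=p+2$ is excluded by showing every line meets $X$ in $0$ or $2$ points and then deriving a parity contradiction (you tally over the parallel class of horizontal lines, the paper over the pencil through a point off $X$ --- the same idea), and the lower bound is the same conic $x^2+dy^2=1$ with $-d$ a non-residue, which is exactly the paper's case split on $p\bmod 4$. No gaps.
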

\begin{proof}
Let $p$ be an odd prime number. 
If $p\equiv 1 \pmod 4$ then take $q$ to be some quadratic nonresidue modulo $p$.
If $p\equiv 3 \pmod 4$ take $q$ to be some quadratic residue modulo $p$.
Define $X=\{(x,y)\in T_{p\times p} : x^2 + q \cdot y^2 \equiv 1 \pmod p \}$.
It is known that $X$ has $p+1$ points. See for example Theorem~10.5.1 in~\cite{BEW}.
By Lagrange theorem for congruences, any line intersects $X$ in at most two points.
Hence the set $X$ satisfies the no-three-in-line condition and $\uptau(T_{p\times p}) \geq p+1.$

Let $X$ satisfy the no-three-in-line condition. We can assume that $O\in X$. By Lemma \ref{L:sum}, 
every other point of the $T_{p\times p}$ lies on one or the
other of the $p+1$ lines passing through $O$. Hence $|X|\leq p+2$ and we have $\uptau(T_{p\times p})\leq p+2$.

Assume that there exists a set $Y$ with $p+2$ points which satisfies the no-three-in-line condition.
Take any line $L$ in $T_{p\times p}$. We claim that either $|L\cap Y|=0$ or $|L\cap Y|=2$.
Indeed, if $L\cap Y = \{y\}$ then $L$ is the line passing through point $y\in Y$ which does not pass by any other point of $Y$ and consequently $|Y|\leq p+1$, a contradiction.
Now fix any point $z$ not in $Y$. Since each line through~$z$ contains either $0$ or $2$ points of $Y$, the number of points in $Y$ is even, a contradiction with the fact that $p+2$ is odd.
This means that the set $Y$ does not exist.
\end{proof}

\begin{proof}[Proof of Theorem \ref{M2}(3b)]
\item Theorem \ref{L:rev} together with Theorem \ref{T:p_p} gives the statement.
\end{proof}

\subsection*{Acknowledgment}
We express our sincere thanks to the anonymous
referee for his/her valuable advice which resulted in an improvement 
of this article.

\end{document}